\newtheorem{lemma}{Lemma}
\newtheorem{thm}[lemma]{Theorem}
\title{Minimal tilings of a unit square}
\author{Iwan Praton\\ Franklin \& Marshall College\\ Lancaster, PA 17604}
\date{}
\begin{document}
\maketitle
\begin{abstract}
Tile the unit square with $n$ small squares. We determine
the minimum of the sum of the side lengths of the $n$ small squares,
where the minimum is taken over all tilings of the unit square
with $n$ squares.
\end{abstract}
There are many interesting questions that arise from placing
non-overlapping small squares inside a larger square. 
For example, if we require the small squares to have different
side lengths and to tile the large square, then we have
the classic ``squaring the square'' problem, popularized
in Martin Gardner's column (November 1958).
The website \texttt{squaring.net}
contains a trove of information about similar problems. 

Erd\H{o}s and Soifer introduced a different kind of question
in this situation: if we put $n$ non-overlapping small squares inside the
unit square, how big can the side lengths of 
small squares get? They defined a function $f_M(n)$
(essentially the largest possible sum of the side lengths
of the $n$ small squares) and gave a precise conjecture
of its value. Erd\H{o}s also offered a \$50 bounty
for its proof or disproof.
There has been some progress on this question
---see [2], [3], [4]---but the conjecture is still unsolved.

In this paper we look at the natural analogue $f_m$ of
the function $f_M$, where we investigate the minimum
instead of the maximum. More precisely, and to fix
our terminology and notation, let
 $n\neq 1,2,3,5$ be a positive integer and let $T$ be a 
 \emph{tiling} of the
unit square using $n$ small squares (thus the $n$ small squares
are placed inside the unit square, completely filling it,
and with their interiors non-overlapping). The small
squares are called tiles. Note that it is not possible
to tile the unit squares with $2,3$, or $5$ tiles;
we also exclude $n=1$ to avoid triviality. We define $\sigma(T)$
to be the total length of the tiling $T$: if $s_1,\ldots,s_n$ are
the side lengths of the tiles, then $\sigma(T)=s_1+\cdots
+s_n$. We want to find out just how \emph{small} $\sigma(T)$
can be; in order to make this question interesting, we'll insist
that each tile has positive length, so that $s_i>0$ for all
$1\leq i\leq n$.

Define the function $f_m(n)=\min_T \sigma(T)$,
where the minimum is taken over all possible tilings of the unit
square using $n$ tiles. The aim of this paper is to prove the 
following formula: for $k\geq 2$,
\[
f_m(n) =\begin{cases}%
3-\frac2{k} & \text{if $n=2k$;}\\
3-\frac1{k} & \text{if $n=2k+3$}.
\end{cases}
\] 

These values can be attained: if $n=2k$,
tile the unit square with one big tile of length $(k-1)/k$
and $2k-1$ small tiles of length
$1/k$ (there is essentially only one way to do this). If
$n=2k+3$, we start with the minimal tiling for
$2k$, then divide one of the tiles of length $1/k$ into 4 equal
tiles. The figures below show the cases $n=8$ and $n=11$
(i.e.,  $k=4$).

\begin{figure}[h]
\begin{center}
\begin{tikzpicture}[scale=0.6]
\draw[thick] (0,0) -- (3,0) -- (3,3) -- (0,3) -- (0,0);
\draw[thick] (3,0) -- (4,0) -- (4,4) -- (0,4) -- (0,3);
\draw[thick] (3,1) -- (4,1);
\draw[thick] (3,2) -- (4,2); \draw[thick] (3,4) -- (3,3) -- (4,3);
\draw[thick] (1,3) -- (1,4); \draw[thick] (2,3) -- (2,4); 
\end{tikzpicture}
\quad
\begin{tikzpicture}[scale=0.6]
\draw[thick] (0,0) -- (3,0) -- (3,3) -- (0,3) -- (0,0);
\draw[thick] (3,0) -- (4,0) -- (4,4) -- (0,4) -- (0,3);
\draw[thick] (3,1) -- (4,1);
\draw[thick] (3,2) -- (4,2); \draw[thick] (3,4) -- (3,3) -- (4,3);
\draw[thick] (1,3) -- (1,4); \draw[thick] (2,3) -- (2,4); 
\draw[thick] (3,3.5) -- (4,3.5);
\draw[thick] (3.5,3) -- (3.5,4);
\end{tikzpicture}
\end{center}
\end{figure}

(It turns out there is another
minimal tiling in the odd case. Start with three tiles of side
length $1/2$; then tile the remaining $1/2\times 1/2$ 
empty space with $2k$ tiles in the best (minimal) way.)

To start the proof, we introduce coordinates. Put
the unit square so that its corners are at $(0,0), (0,1),
(1,0)$, and $(1,1)$. For a given
tiling $T$, recall from [4]
the Staton-Tyler function $f$, where $f(c)$ is the number
of tiles that intersect the vertical line $x=c$  ($0\leq c\leq 1$).
Staton and Tyler showed that 
$\sigma(T)= \int_0^1 f(x)\,dx$. Note that the definition
of $f$ includes an ambiguity when the line $x=c$
intersects a vertical edge of a tile in $T$, but the
ambiguity is not harmful since we only use $f$
inside an integral.

Lemmas 1 to 4 below  have appeared
in [3] under a different context.

\begin{lemma} Suppose $T$ is a minimal tiling. Then
there exist two tiles whose lengths add up to $1$.
\end{lemma}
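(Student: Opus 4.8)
The plan is to exploit the Staton--Tyler representation $\sigma(T)=\int_0^1 f(x)\,dx$ together with two elementary facts about $f$. First, for any value $c$ at which the line $x=c$ misses every vertical edge of $T$ (all but finitely many $c$), the tiles meeting $x=c$ have vertical extents that partition the segment from $(c,0)$ to $(c,1)$; since each such tile is a square, its vertical extent equals its side length, so the side lengths of the tiles crossing $x=c$ sum to exactly $1$. Second, $f(c)\geq 2$ for every such $c$: if only one tile met the line it would have to span the full height of the unit square, forcing side length $1$ and hence $n=1$, which is excluded.

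Given these two facts, it suffices to produce a single non-ambiguous value $c$ with $f(c)=2$, for then the two tiles crossing $x=c$ have lengths adding to $1$. To locate such a $c$ I would invoke minimality. The explicit tilings described above show that $f_m(n)\leq 3-\tfrac{2}{k}<3$ when $n=2k$ and $f_m(n)\leq 3-\tfrac1k<3$ when $n=2k+3$, and every admissible $n$ falls into one of these two families; hence a minimal tiling satisfies $\sigma(T)<3$. Let $A=\{x\in(0,1): f(x)=2\}$. Off the finite ambiguity set we have $f\geq 2$ everywhere and $f\geq 3$ outside $A$, so
\[
\sigma(T)=\int_0^1 f(x)\,dx\ \geq\ 2\,|A|+3\,(1-|A|)=3-|A|.
\]
Rearranging gives $|A|\geq 3-\sigma(T)>0$, so $A$ has positive Lebesgue measure.

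Finally, since the ambiguity values form a finite set, $A$ contains a value $c$ at which the line $x=c$ meets no vertical edge. At this $c$ exactly two tiles cross the line, and by the first observation their side lengths add to $1$, which proves the lemma.

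The only real care needed is measure-theoretic bookkeeping. One must confirm that $\sigma(T)<3$ genuinely forces $f$ down to the value $2$ on a set of positive measure, rather than merely at the finitely many ambiguous $x$-values where $f$ is not even well defined, and that at a ``good'' $c$ the two crossing tiles truly realize the sum $1$ (using that their vertical extents tile $[0,1]$ without gaps or interior overlaps). I expect this verification---ensuring $f$ attains $2$ at a legitimate point rather than only in the ambiguous limit---to be the main, though modest, obstacle; everything else is immediate from the Staton--Tyler integral and the construction-based bound $\sigma(T)<3$.
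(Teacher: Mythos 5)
Your proof is correct and takes essentially the same approach as the paper's: both rest on the Staton--Tyler identity $\sigma(T)=\int_0^1 f(x)\,dx$ plus the bound $\sigma(T)<3$ for a minimal tiling (implicit in the paper, justified by you via the explicit constructions) to force $f(x)=2$ at some point, where the two crossing tiles have side lengths summing to $1$. The paper phrases this as a one-line contrapositive ($f\geq 3$ everywhere would give $\sigma(T)\geq 3$); you have merely made explicit the same argument's measure-theoretic bookkeeping and the geometric fact that the tiles crossing a generic vertical line have lengths summing to exactly $1$.
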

\begin{proof}
If not, then $f(x)\geq 3$ for all $x\in [0,1]$. Thus
$\sigma(T)=\int_0^1 f(x)\,dx\geq 3$, a contradiction.
\end{proof}

Let's say that $T$ is minimal and $A$ and $B$ are two tiles
with $s_A+s_B=1$.

\begin{lemma}
One (or both) of $A$ and $B$ must lie at a corner.
\end{lemma}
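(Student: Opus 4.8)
The plan is to argue by contradiction, showing that if neither $A$ nor $B$ sits at a corner then $\sigma(T)\ge 3$, which is impossible for a minimal tiling (the explicit tilings exhibited above already give $\sigma(T)<3$, so $f_m(n)<3$). Write $A=[a_1,a_1+s_A]\times[a_2,a_2+s_A]$ and $B=[b_1,b_1+s_B]\times[b_2,b_2+s_B]$. The first step is to record the only two possible configurations, according to whether the $x$-projections $[a_1,a_1+s_A]$ and $[b_1,b_1+s_B]$ have overlapping interiors. Because these projections have lengths summing to $s_A+s_B=1$ and both lie in $[0,1]$, if their interiors are disjoint they must tile $[0,1]$, forcing one of $A,B$ to hug the left edge and the other the right edge; if their interiors overlap, then a vertical line through the overlap meets $A$ and $B$ in two disjoint vertical segments of total length $1$, which therefore fill the whole column, so one tile rests on the bottom edge and the other hangs from the top edge (and the $y$-projections now tile $[0,1]$).

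In the side-by-side case, say $A=[0,s_A]\times[a_2,a_2+s_A]$ and $B=[s_A,1]\times[b_2,b_2+s_B]$. The assumption that $A$ is not at a corner gives $0<a_2<s_B$, so the vertical cross-section $[a_2,a_2+s_A]$ of $A$ is strictly interior to $[0,1]$; since the tiling fills the square, every vertical line $x=c$ with $0<c<s_A$ meets $A$ together with at least one tile below it and one above it, so $f(c)\ge 3$ there. The same reasoning applied to $B$ (using $0<b_2<s_A$) gives $f(c)\ge 3$ for $s_A<c<1$. Hence $f\ge 3$ almost everywhere and $\sigma(T)=\int_0^1 f\ge 3$, a contradiction.

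The stacked case is handled by the mirror-image argument, using the horizontal Staton--Tyler function $g(c)$ counting tiles that meet the line $y=c$; transposing the square gives $\sigma(T)=\int_0^1 g(y)\,dy$ as well. With $A$ on the bottom and $B$ on the top, the hypothesis that neither is at a corner forces both $x$-projections to be strictly interior to $[0,1]$, so every horizontal line meets one of $A,B$ along with a tile to its left and a tile to its right, giving $g\ge 3$ almost everywhere and again $\sigma(T)\ge 3$.

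I expect the main obstacle to be the bookkeeping in the two geometric configurations---in particular, justifying cleanly that the overlapping-projection case really does pin one tile to the bottom and the other to the top, and verifying that ``not at a corner'' translates into strict interiority of the relevant projection in each case. Once those structural facts are in place, the bound $f\ge 3$ (respectively $g\ge 3$) almost everywhere is immediate from the fact that the tiling fills the square, and the contradiction with $\sigma(T)<3$ closes the argument.
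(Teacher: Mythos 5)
Your proof is correct, and it reaches the same contradiction ($\sigma(T)\ge 3$ for a minimal tiling) as the paper, but by a genuinely different and more complete route. The paper draws only the stacked configuration (one of $A,B$ on the bottom edge, the other on the top, with overlapping $x$-projections) --- implicitly justified because its $A$ and $B$ arise in the proof of Lemma 1 from a vertical line crossed by exactly two tiles --- and then bounds $\sigma(T)$ by a boundary count: the tiles covering the left edge and those covering the right edge contribute total length $2$, neither $A$ nor $B$ is among them (not being at a corner, neither touches a vertical side of the unit square), and adding $s_A+s_B=1$ gives $\sigma(T)\ge 3$. You instead take the lemma's hypothesis at face value (two tiles whose lengths sum to $1$, with no positional assumption), prove the clean dichotomy that the pair is either side-by-side, with $x$-projections tiling $[0,1]$, or stacked, with vertical slices tiling a column, and in each case deduce $f\ge 3$ (resp.\ $g\ge 3$) almost everywhere, then integrate via Staton--Tyler. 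Your dichotomy step is a genuine plus: as stated, Lemma 2 does not assume $A$ and $B$ meet a common vertical line, so the side-by-side configuration does need to be addressed (or one must import the extra structural information from Lemma 1's proof), and the paper's proof-by-picture leaves this implicit. The two bounding mechanisms are essentially equivalent in strength --- your pointwise bound $f\ge 3$, integrated, is exactly the estimate the paper itself uses to prove Lemma 1, while the paper's edge count is a shortcut avoiding the integral --- so the substantive difference is your explicit case analysis and the transposed function $g$ for the stacked case, bought at the cost of some extra bookkeeping, all of which you carry out correctly.
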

\begin{proof} Again by contradiction.

\begin{minipage}{0.7\textwidth}
If neither $A$ nor $B$ lie at a corner, then the tiling 
looks like the figure on the right.
The tiles on the left edge of the unit square, together
with the tiles on the right edge of the unit square,
have total length $2$. Note that neither $A$ nor $B$
are included in this count. Thus the total length of the
tiling is at least $3$, meaning $T$ is not minimal.
\end{minipage}
\qquad
\begin{minipage}{0.3\textwidth}
\begin{tikzpicture}[scale=0.25]
\draw (0,0) rectangle (10,10);
\draw[ultra thick] (1,0) rectangle (7,6);
\draw[ultra thick] (4,6) rectangle (8,10);
\node at (4,3) {A};
\node at (6,8){B};
\end{tikzpicture}
\end{minipage}
\end{proof}

We can actually conclude more than this. We can
assume without loss of generality that both $A$
and $B$ are corner tiles, as in the following
lemma.

\begin{lemma}
Suppose $A$ is a corner tile and $B$ is not. Then
there exists a similar tiling, with exactly the same
total length, where $B$ is a corner tile.
\end{lemma}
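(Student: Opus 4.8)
The plan is to reduce to a normalized picture and then move $B$ to a corner by a single local rearrangement that only relocates \emph{whole} tiles, so that the multiset of side lengths --- and hence $\sigma$ --- is preserved automatically. First I would normalize. As in the proof of Lemma 1, the pair $A,B$ comes from a vertical line $x=c$ meeting exactly two tiles; these two tiles stack and cover $\{c\}\times[0,1]$, so one runs along the bottom edge and the other along the top. Reflecting the unit square horizontally and/or vertically, I may assume $A$ is the bottom tile and is the corner tile at the bottom-left, so $A=[0,s_A]\times[0,s_A]$, while $B$ is the top tile, $B=[b_1,b_1+s_B]\times[s_A,1]$ with $s_B=1-s_A$. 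Since $B$ is not a corner tile we have $0<b_1$ and $b_1+s_B<1$; and since $A$ and $B$ share an interior vertical line, $b_1<s_A$.

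The crucial observation is that the rectangle $U=[0,b_1]\times[s_A,1]$ lying just to the left of $B$ is itself a union of whole tiles. Indeed, its left and top sides are walls; its right side $x=b_1$ is exactly the left edge of $B$; and because $b_1<s_A$, its bottom side $y=s_A$ sits on top of $A$. Hence a tile meeting the interior of $U$ can cross none of these four sides (the last two because it would otherwise overlap $A$ or $B$), so it is contained in $U$.

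Now I rearrange. Delete the tiles filling $U$ and lift out $B$; this empties the rectangle $[0,b_1+s_B]\times[s_A,1]$. Refill it by placing $B$ at the top-left corner, $[0,s_B]\times[s_A,1]$, and translating the block of tiles that filled $U$ rigidly to the right into $[s_B,b_1+s_B]\times[s_A,1]$ (same width $b_1$, same internal tiling). Everything outside this rectangle is untouched, and the only interfaces with the unchanged part lie along the segment $y=s_A$ and the segment $x=b_1+s_B$. Both are edges already present in $T$ --- the top edge of $A$ together with the bottom edge of $B$, and the right edge of $B$ --- so no tile is split, and the result $T'$ is a genuine tiling. It uses exactly the same tiles as $T$, whence $\sigma(T')=\sigma(T)$, and $B$ is now a corner tile.

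I expect the main obstacle to be the crucial observation that $U$ is a clean union of tiles, and the realization that one should empty \emph{all} of $U$ before repositioning $B$. A naive ``slide $B$ leftward'' breaks down when $B$ is narrower than $b_1$, since $B$ would then have to pass through tiles of $U$; emptying the whole rectangle first sidesteps every such case and makes the single move above valid regardless of the relative sizes of $b_1$, $s_A$, and $s_B$. The one fact that must be pinned down to license everything is the strict inequality $b_1<s_A$, which is what guarantees that $U$ rests on $A$ and is therefore cleanly removable.
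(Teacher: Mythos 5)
Your proposal is correct and is essentially the paper's own argument made rigorous: the paper's proof-by-picture swaps $B$ with the rectangular block of tiles between $B$ and the wall on $A$'s side (your rectangle $U$, up to a reflection), exactly the whole-tile exchange you describe. Your added justification that $U$ is a union of whole tiles (via $b_1<s_A$, so $U$ rests on $A$) is precisely what the paper's figure leaves implicit.
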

\begin{proof}
The proof is by picture:
\begin{figure}[h]
\begin{center}
\begin{tikzpicture}[scale=0.3]
\draw (0,0) rectangle (10,10);
\draw[ultra thick] (4,0) rectangle (10,6);
\draw[ultra thick] (3,6) rectangle (7,10);
\node at (7,3) {A};
\node at (5,8){B};
\node at (8.5,9){\footnotesize some};\node at (8.5,8){\footnotesize tiles};
\node at (8.5,7){\footnotesize here};
\end{tikzpicture}
\qquad
\begin{tikzpicture}[scale=0.3]
\draw (0,0) rectangle (10,10);
\draw[ultra thick] (4,0) rectangle (10,6);
\draw[ultra thick] (6,6) rectangle (10,10);
\draw (3,6) rectangle (6,10);
\node at (7,3) {A};
\node at (8,8){B};
\node at (4.5,9){\footnotesize same};\node at (4.5,8){\footnotesize tiles};
\node at (4.5,7){\footnotesize here};
\end{tikzpicture}
\end{center}
\end{figure}

\end{proof}

We can thus assume that in our minimal tiling, one vertical
side of the unit square has just
two tiles, $A$ and $B$. We choose their names so that $s_A\geq s_B$.

\begin{lemma}
Let $A$ and $B$ be corner tiles as above, with
$s_A\geq s_B$. Then we can assume without harm
that there is another tile with length $s_B$ in the opposite
corner from $B$.
\end{lemma}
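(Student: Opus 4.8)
The plan is to argue by a length‑preserving rearrangement, in exactly the spirit of Lemma 3. Fix coordinates so that $A$ and $B$ occupy the left edge, with $A=[0,s_A]\times[0,s_A]$ in the bottom‑left corner and $B=[0,s_B]\times[1-s_B,1]$ in the top‑left corner; since $s_A+s_B=1$, these two tiles cover the left edge exactly. Everything outside $A\cup B$ is an L‑shaped region, namely the right strip $P=[1-s_B,1]\times[0,1]$ of width $s_B$ together with the top‑middle rectangle $Q=[s_B,1-s_B]\times[1-s_B,1]$. Let $C$ be the tile occupying the corner opposite $B$, i.e.\ the bottom‑right corner $(1,0)$, with side $s_C$. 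Because $C$ lies to the right of $A$ it cannot cross the line $x=1-s_B$, so $s_C\le s_B$; the claim to be established is that we may assume $s_C=s_B$.

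The first move is to reduce the whole question to the strip $P$. Along the left boundary $x=1-s_B$ of $P$, the strip is bordered by $A$ for $0\le y\le 1-s_B$, which is a genuine tile edge, so below height $1-s_B$ the strip is already a clean union of whole tiles. The only place a tile can straddle $x=1-s_B$ is the top band $y\in[1-s_B,1]$, where $P$ meets $Q$. Granting that this interaction can be removed, $P$ becomes a clean $s_B\times 1$ rectangle tiled by the squares lying inside it, and permuting its horizontal layers neither changes the multiset of tiles nor the value of $\sigma(T)$, and keeps $n$ fixed. The heart of the argument is then to locate a single tile of side exactly $s_B$ inside $P$: such a tile spans the full width of the strip, so sliding the layers so that it becomes the bottom layer places a side‑$s_B$ tile precisely in the corner opposite $B$. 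To find it I would use the Staton--Tyler integral restricted to $P$: letting $g(y)$ count the tiles of $P$ met by the horizontal line at height $y$, the tiles of $P$ contribute $\int_0^1 g(y)\,dy$ to $\sigma(T)$, with $g\ge 1$ throughout. If no full‑width tile existed, then $g(y)\ge 2$ for almost every $y$, whence the tiles of $P$ contribute at least $2$ and, together with $A$, force $\sigma(T)\ge s_A+2=3-s_B$. Comparing with the explicit tilings of the introduction, which give $\sigma<3-s_B$ for the relevant values of $n$, this contradicts minimality and so exhibits a full‑width tile to slide into place.

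I expect two points to cause the real difficulty. The first is the initial reduction: controlling tiles that straddle the inner boundary between $P$ and $Q$, so that $P$ can legitimately be treated as a clean, independently re‑orderable rectangular strip. (Here it helps that a genuine full‑width tile of side $s_B$ can never straddle $x=1-s_B$, so the straddlers are only an obstruction to the separation, not to the final conclusion.) The second, and I think the true crux, is the borderline case where the integral bound $\sigma(T)\ge 3-s_B$ is met with \emph{equality} rather than strict inequality---this occurs exactly in the odd case when $s_B$ is as large as the construction allows, so minimality alone does not rule out a strip $P$ with no full‑width tile. In that situation one cannot merely locate a full‑width tile but must manufacture one by an explicit equal‑length surgery inside $P$; verifying that this surgery preserves both $\sigma(T)$ and $n$ is where the bulk of the careful work will lie. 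Once the configuration is cleanly separated into the strip $P$ and the remainder, the layer‑reordering itself is a rigid, length‑preserving move, just like the block translation used in Lemma 3.
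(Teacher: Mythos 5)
Your route is genuinely different from the paper's, which disposes of this lemma in one line: rotate the tiling by $90^\circ$ and reapply Lemmas 1 and 3, so that the Staton--Tyler bound in the horizontal direction produces two tiles of total side $1$ on a common horizontal line, which the corner-moving argument then places at corners; since $s_A\ge 1/2$, this horizontal pair must involve $A$, leaving a side-$s_B$ tile at the corner opposite $B$. Your strip analysis can be made to work, but as submitted it has a genuine gap, and the gap is self-inflicted by an undercount. Almost every horizontal line meets $A$ or $B$ (their $y$-ranges $[0,s_A]$ and $[s_A,1]$ cover $[0,1]$, and neither tile meets the open strip $P$), so if no side-$s_B$ tile spans $P$ then the horizontal line at height $y$ meets at least $1+g(y)\ge 3$ tiles for a.e.\ $y$, giving $\sigma(T)\ge s_A+s_B+2=3$ --- you forgot $B$'s contribution. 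Since the explicit constructions give $f_m(n)\le 3-1/k<3$, this contradicts minimality for \emph{every} $n$ and every value of $s_B$, with no borderline case. Your weaker bound $3-s_B$ is exactly what manufactures the ``equality case'' in the odd regime; you correctly observe that this case defeats your argument as stated, but the ``explicit equal-length surgery'' you invoke to repair it is never constructed, and since a length-preserving rearrangement is the entire content of this lemma, deferring it is not a proof. With the corrected count the case vanishes and no surgery is needed.

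The other granted step --- cleaning the straddlers across $x=1-s_B$ --- is also left unproved, but it too can be bypassed rather than fixed. For the integral bound it is irrelevant: every tile meeting the open strip contributes its full side length to $\int_0^1 g(y)\,dy$, straddling or not, so $g\ge 2$ a.e.\ still yields a contribution of at least $2$ from tiles distinct from $A$ and $B$. For the final rearrangement you do not need to reorder all of $P$ into ``layers'' (a clean strip need not decompose into horizontal layers anyway): once a full-width tile occupies $[1-s_B,1]\times[y_0,y_0+s_B]$, necessarily $y_0\le 1-s_B$, and any tile crossing $x=1-s_B$ has its vertical extent inside $[1-s_B,1]$ because $A$ sits to the left below that height; hence no tile meeting the sub-strip $[1-s_B,1]\times[0,y_0]$ can straddle, that block is a clean union of whole tiles, and swapping it with the full-width tile (translate the tile down by $y_0$, the block up by $s_B$) is the length- and count-preserving move, exactly analogous to the block translation in Lemma 3. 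In short: the right species of argument and salvageable, but the proposal leaves its self-identified crux unresolved, when correctly counting $B$ would have dissolved that crux entirely.
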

\begin{proof}
Turn the tiling with $A$ and $B$ by 90 degrees, then
apply
the same reasoning as before, especially lemmas 1
and 3. We get the conclusion.
\end{proof}

\begin{minipage}{0.6\textwidth}
We now have quite a bit of information about
the minimal tiling $T$. There is a large tile $A$
at one corner of the unit square, and on each
adjacent corner there is a tile of size $s_B=1-s_A$.
Note that this implies $s_A\geq 1/2$, so $A$
is the largest tile in $T$. 
\end{minipage}
\quad
\begin{minipage}{0.4\textwidth}
\begin{center}
\begin{tikzpicture}[scale=0.3]
\draw (0,0) rectangle (10,10);
\draw[ultra thick] (4,0) rectangle (10,6);
\draw[ultra thick] (6,6) rectangle (10,10);
\draw [ultra thick] (0,0) rectangle (4,4);
\node at (7,3) {A};
\end{tikzpicture}
\end{center}
\end{minipage}

\medskip

We now concentrate on this tile $A$. It may be
the largest tile in the tiling, but it can't be \emph{too}
large. 

\begin{lemma}
Suppose $T$ is a tiling with $2k$ tiles. Let $A$
denote the largest tile in $T$, as above. Then 
$s_A\leq (k-1)/k$.
\end{lemma}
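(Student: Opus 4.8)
The plan is to combine the corner structure already established with a simple area count. I place the unit square as $[0,1]^2$ and, using Lemmas 1--4, put the large tile at the corner, so that $A=[0,s_A]\times[0,s_A]$; recall $s_A\geq 1/2$, and set $s_B=1-s_A$, so $s_B\leq 1/2$. The tiling has exactly $2k$ tiles, one of which is $A$, leaving $2k-1$ other tiles to account for.

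The crucial observation I would establish is that \emph{every tile other than $A$ has side length at most $s_B$}. Indeed, let $Q=[a,a+q]\times[b,b+q]$ be any tile with $Q\neq A$. Since the interiors of $Q$ and $A=[0,s_A]^2$ are disjoint, we cannot have both $a<s_A$ and $b<s_A$; hence either $a\geq s_A$ or $b\geq s_A$. In the first case $Q$ lies in the vertical strip $[s_A,1]\times[0,1]$ of width $1-s_A=s_B$, so $q\leq s_B$; in the second case $Q$ lies in a horizontal strip of height $s_B$, again forcing $q\leq s_B$. Geometrically, the region left uncovered by $A$ is an L-shape both of whose arms have thickness $s_B$, so no square of side exceeding $s_B$ can fit inside it.

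Now I would bring in the area identity. Since the tiles partition the unit square, $\sum_i s_i^2=1$, so the $2k-1$ tiles other than $A$ satisfy $\sum_{i\neq A}s_i^2 = 1-s_A^2$. Combining this with the bound $s_i\leq s_B$ for each such tile gives $1-s_A^2\leq (2k-1)s_B^2$. Writing $s_B=1-s_A$ and factoring $1-s_A^2=(1-s_A)(1+s_A)$, I can cancel the common factor $1-s_A>0$ to obtain $1+s_A\leq(2k-1)(1-s_A)$, which rearranges to $2k\,s_A\leq 2k-2$, that is, $s_A\leq (k-1)/k$.

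The argument is essentially self-contained once $A$ is placed at a corner; the one step that deserves care is the geometric claim that the non-$A$ tiles are bounded by $s_B$ rather than merely by $s_A$ (the fact that $A$ is the largest tile is \emph{not} enough for this). I expect this dichotomy to be the only real obstacle, and it is dispatched by the strip argument above. As a sanity check, the inequality $1-s_A^2\leq (2k-1)s_B^2$ is tight precisely when all $2k-1$ remaining tiles have side exactly $s_B=1/k$, which is exactly the extremal tiling described in the introduction.
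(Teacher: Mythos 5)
Your proposal is correct and takes essentially the same route as the paper: bound every tile other than $A$ by $s_B=1-s_A$, then compare the total area $(2k-1)(1-s_A)^2+s_A^2$ with $1$. Your factoring and cancellation of $1-s_A$ is just a cleaner way of handling the quadratic the paper analyzes, and your strip argument supplies an explicit justification for the bound $s_i\leq 1-s_A$ that the paper simply asserts.
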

\begin{proof}
Every tile except $A$ has length at most $1-s_A$.
Therefore the total area of all tiles is at most
$(2k-1)(1-s_A)^2+s_A^2=2ks_A^2-(4k-2)s_A+(2k-1)$.
It is straightforward to verify that this quadratic in $s_A$
is smaller than 1 whenever $(k-1)/k <s_A<1$. Thus
we can only achieve a tiling when $s_A\leq (k-1)/k$.
\end{proof}

It would be nice to have a similar result when the number
of tiles in $T$ is odd, but the calculation is not as
straightforward. We begin with a careful consideration
of the placement of the tiles. 

\begin{lemma}
For $n\geq 2$, suppose 
$1/(k+1)<b<1/k$ and $T$ is a tiling with a large 
corner tile $A$ of length $s_A=1-b$.
Then $\sigma(T)\geq 3-b$.
\end{lemma}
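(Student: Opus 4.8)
The plan is to reduce the claim to a statement about the region left uncovered by $A$. Placing $A$ at the lower-left corner, so that $A=[0,1-b]\times[0,1-b]$, the complement of $A$ inside the unit square is an L-shaped region both of whose arms have width $b$. Every tile other than $A$ lies inside one of these arms, so \emph{every tile other than $A$ has side at most $b$}; this is the one structural fact I will use repeatedly. Writing $\sigma(T)=(1-b)+\sigma'$, where $\sigma'$ is the total side length of the tiles filling the L-region, the inequality $\sigma(T)\ge 3-b$ is equivalent to $\sigma'\ge 2$. So the whole problem becomes: any square tiling of this L-shaped region has total side length at least $2$.

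I would first record two easy lower bounds, both of which fall tantalizingly short. First, an area bound: the tiles other than $A$ have squared sides summing to $1-(1-b)^2=2b-b^2$, and since each has side at most $b$,
\[
\sigma'=\sum_{i\ne A}s_i\ge\frac{1}{b}\sum_{i\ne A}s_i^2=\frac{2b-b^2}{b}=2-b,
\]
which is short of $2$ by $b$. Second, a Staton--Tyler bound: for $x\in(1-b,1)$ the vertical line crosses only tiles of side $\le b$ and must cover a full unit of height, so $f(x)\ge\lceil 1/b\rceil=k+1$; for $x\in(0,1-b)$ the line crosses $A$ together with at least one tile of the strip above it, so $f(x)\ge 2$. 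Integrating gives $\sigma(T)\ge 2(1-b)+(k+1)b=2+(k-1)b$, i.e. $\sigma'\ge 1+kb$, short of $2$ by $\delta:=1-kb$. The hypothesis $1/(k+1)<b<1/k$ is exactly what guarantees $0<\delta<b$, so neither bound alone suffices.

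The crux is to recover the missing amount by exploiting incommensurability. The Staton--Tyler bound is tight only if the strip directly above $A$ is a single horizontal row of side-$b$ squares; but such a row would force the width $1-b$ (indeed the full top edge of length $1$) to be an integer multiple of $b$, which is false since $1=kb+\delta$ with $0<\delta<b$. Hence at least one tile resting on top of $A$ has side strictly less than $b$, and the thin channel it leaves above itself must be filled by still further tiles. The key quantitative point I would prove is that such a channel of width $w<b$ costs at least its full height in total side length (the total side length of any tiling of a rectangle is at least its larger dimension, obtained by integrating the horizontal crossing count, which is $\ge 1$, over the full height), so the forced refinement returns, in aggregate, precisely the deficit $\delta$.

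Making this last step rigorous is where the real work lies, and I expect it to be the main obstacle. The difficulty is bookkeeping: tiles may straddle the line $y=1-b$ separating the top strip from the right arm, so the two arms cannot simply be tiled and bounded independently, and a naive split only reproduces $1+kb$. My preferred route would be an induction on $k$: peel off the outermost row or column of tiles bordering $A$, check that what remains is a similar (rescaled) L-region one step smaller, and track how the side lengths accumulate; alternatively one can set up a direct ``staircase'' accounting along the top and right edges, comparing the tiles that reach the boundary edges with those trapped in the channels. Either way, the whole estimate hinges on converting the single arithmetic fact $1=kb+\delta$, $0<\delta<b$, into the exact surplus $\delta$, and that conversion---uniform over all $b$ in the open interval and robust to boundary-straddling tiles---is the step I expect to be delicate.
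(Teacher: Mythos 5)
Your setup is sound and tracks the paper's in spirit: the reduction to tiles of side at most $b$ outside $A$, the Staton--Tyler counts $f(x)\ge k+1$ on the $b$-wide column off $A$ and $f(x)\ge 2$ over $A$, and the computation that this falls short of $3-b$ by exactly $\delta=1-kb\in(0,b)$ are all correct; your observation that near-equality forces the strip above $A$ to be a row of full-height side-$b$ squares is precisely the dichotomy the paper exploits (namely $f\ge 3$ over $A$ except beneath such tiles). But the proposal stops exactly where the proof has to happen: you yourself flag that converting $1=kb+\delta$ into the missing surplus is ``where the real work lies,'' and neither of your suggested routes (induction on $k$, staircase accounting) is carried out. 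That is a genuine gap, and it sits at the one configuration your sketch cannot dismiss: $k$ side-$b$ tiles along the top edge, whose row of length $kb>1-b$ straddles the boundary between the strip above $A$ and the opposite arm. There the measure of the set where $f=2$ can reach $1-b$, which exceeds $(k-1)b$ since $b<1/k$, the pointwise count yields only $3-b-\delta$, and the surplus must be hunted down inside the $\delta$-wide gap column with all the boundary-straddling bookkeeping you defer.

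The paper needs none of this, and the idea it uses instead is the one absent from your proposal: a symmetry reduction. First, a Lemma 3-style shift pushes the full-height side-$b$ top tiles flush to one side with no gaps, eliminating straddling; second, if there are $k$ of them, flipping the tiling about the diagonal through the corner of $A$ produces an equivalent tiling of the same total length whose top edge carries at most $k-1$ side-$b$ tiles, because $kb>1-b$ leaves room for at most $k-1$ such tiles along the other edge. With $m\le k-1$ secured, the integral bound
\[
\sigma(T)\ \ge\ (k+1)b+2mb+3(1-b-mb)\ =\ 3+(k-2-m)b\ \ge\ 3-b
\]
finishes in one line; note that it never recovers $\delta$ exactly, it only needs the crude bound $m\le k-1$, so your insistence on converting the deficit into an exact surplus aims at a harder target than necessary. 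Your approach is not wrong---in the bad configuration one can indeed check $f\ge k+2$ on the gap column, recovering $\delta$---but as written the central step is missing, and the flip-plus-shift normalization is a decisively simpler substitute for it.
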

\begin{proof}
As usual we put the unit square so it has corners
at $(0,0)$ and $(1,0)$. Place the tile $A$ so that
it has a corner at $(1,0)$. All tiles other than $A$
have length at most $b$. 

We now take a look at the top edge of the unit
square. There are at most $k$ tiles of length $b$
on this top edge (since $(k+1)b>1$). Suppose
there are indeed $k$ tiles of length $b$ on this
top edge. As in Lemma 3 we can assume that
these tiles are all on the right side, with no gaps
between them. Since $kb>1-b$, we have a situation
pictured on the left.

\begin{center}
\begin{tikzpicture}[scale=0.25]
\draw[fill=gray,gray] (0,0) rectangle (5/3,9-5/3);
\draw[fill=gray,gray] (0,22/3) rectangle (2/3,9);
\draw[thick] (0,0) rectangle (9,9);
\draw[thick] (5/3,0) rectangle (9,22/3);
\draw[thick] (7/3,22/3) rectangle (12/3,9);
\draw[thick] (2/3,22/3) rectangle (7/3,9);
\draw[thick] (22/3,22/3) -- (22/3,9);
\node at (17/3,24/3) {$\cdots$};
\end{tikzpicture}
\qquad\qquad
\begin{tikzpicture}[scale=0.25]
\draw[fill=gray,gray] (0,25/3) rectangle (5/3,9);
\draw[fill=gray,gray] (5/3,22/3) rectangle (9,9);
\draw[thick] (0,0) rectangle (9,9);
\draw[thick] (5/3,0) rectangle (9,22/3);
\draw[thick] (0,15/3) rectangle (5/3,20/3);
\draw[thick] (0,20/3) rectangle (5/3,25/3);
\draw[thick] (0,5/3) -- (5/3,5/3); 
\node at (5/6,11/3) {$\vdots$};
\end{tikzpicture}
\end{center}

If we now flip the tiling around the main diagonal of the unit square,
we get an equivalent tiling where there are at most
$(k-1)$ tiles of length $b$ on the top edge. Thus we
can harmlessly assume that the top edge of the unit
square has at most $(k-1)$ tiles of length $b$.
These tiles can further be assumed to be on the
right side, with no gaps between them, so they
all lie on the interval $(b,1)$.

Now recall the Staton-Tyler function $f$. 
 If $0< x< b$, then $f(x)\geq k+1$ since $k$ tiles,
each of length at most $b$, cannot add up to $1$. Now
suppose there are $m$ tiles of length $b$ in
the interval $b< x< 1$; these
tiles necessarily lie on top of tile $A$.
As noted above, we have $m\leq k-1$.
Note that in this interval, we have
$f(x)=2$ whenever we have a tile of size $b$, and $f(x)\geq 3$
otherwise. The $m$ tiles of size $b$ have total length
$mb$, so we have $f(x)\geq 3$ in an interval of length
$1-b-mb$.
Then
\begin{align*}
\sigma(T) &= \int_0^1 f(x)\,dx = \int_0^b f(x)\,dx +\int_b^1 f(x)\,dx\\
&\geq (k+1)b + 2mb + 3(1-b-mb)=3+(k-2-m)b\\
&\geq 3-b \quad \text{since $m\leq k-1$};
\end{align*}
this is exactly what we want.
\end{proof}

We can now get a result similar to Lemma 5 for odd
tilings.
\begin{lemma}
Suppose $T$ is a minimal tiling with $2k+3$ tiles.
As before, let $A$ denote the largest tile in $T$.
Then  $s_A\leq (k-1)/k$.
\end{lemma}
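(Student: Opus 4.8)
The plan is to argue by contradiction, exactly as in Lemma 5, but now feeding the estimate of Lemma 6 into the minimality of $T$. Suppose $s_A > (k-1)/k$ and set $b = 1 - s_A$, so that $0 < b < 1/k$; since $A$ is a large corner tile, every other tile has length at most $b$. Because $T$ is minimal and the construction in the introduction exhibits a tiling with $2k+3$ tiles of total length $3-\frac1k$, we have $\sigma(T) = f_m(2k+3) \leq 3 - \frac1k$. I would therefore aim to show that $b < 1/k$ forces $\sigma(T) > 3 - \frac1k$, contradicting minimality.

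First I would dispose of every $b$ that is not the reciprocal of an integer. If $\frac{1}{k'+1} < b < \frac{1}{k'}$ for some integer $k'$, then $b < 1/k$ gives $k' \geq k$, and Lemma 6 applies directly: $\sigma(T) \geq 3 - b > 3 - \frac{1}{k'} \geq 3 - \frac1k$ (when $k' = k$ the strict inequality $b < 1/k$ still yields $3 - b > 3 - \frac1k$). This already contradicts $\sigma(T) \leq 3 - \frac1k$. The only values left untreated are $b = 1/k'$ with $k' \geq k+1$. To cut these down to a single case I would bring in the area: the tiles have total area $1$, the tile $A$ contributes $(1-b)^2$, and each of the other $2k+2$ tiles contributes at most $b^2$, so $1 \leq (1-b)^2 + (2k+2)b^2$, which simplifies to $b\big((2k+3)b - 2\big) \geq 0$ and hence $b \geq \frac{2}{2k+3}$. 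Combined with $b < 1/k$, the only reciprocal left in range is $b = \frac{1}{k+1}$. (Equivalently, in square-counting language, the residual L-shaped region has minimal squaring number $2k'-1$, so $2k+2 \geq 2k'-1$ forces $k' = k+1$.)

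This leaves the genuinely delicate boundary case $b = \frac{1}{k+1}$, $s_A = \frac{k}{k+1}$, which I expect to be the main obstacle. Here the unit square minus $A$ is an L-region whose two arms have width exactly $\frac{1}{k+1}$, tiled by $2k+2$ squares of side at most $\frac{1}{k+1}$. Splitting the Staton--Tyler integral, the line $x=c$ meets at least $k+1$ tiles for $c$ in the left arm and at least two tiles (namely $A$ and a tile above it) for $c$ to the right of $A$, so $\sigma(T) \geq 1 + 2(1-b) = \frac{3k+1}{k+1}$, with equality precisely when every small tile has side $\frac{1}{k+1}$. But that equality configuration uses $2k+2$ tiles in all (the even minimal tiling), whereas $T$ has the odd number $2k+3$.

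The difficulty is that $\frac{3k+1}{k+1}$ falls short of the target $3 - \frac1k$ by exactly $\frac{k-1}{k(k+1)}$, and a pure area or Cauchy--Schwarz estimate cannot recover this gap, since the functional measuring the shortfall is driven to zero by a single near-degenerate tile. The real content is therefore combinatorial: the L-region of arm width $\frac{1}{k+1}$ cannot be squared with exactly one more than its minimal number of tiles (the same phenomenon that forbids tiling a $1 \times N$ strip with $N+1$ squares), so the odd count $2k+3$ simply cannot be realized with $s_A = \frac{k}{k+1}$, making this case vacuous. I would prove this impossibility by following a corner tile inward and tracking how the width-$\frac{1}{k+1}$ constraint forces any deviation from the clean stacking to spawn at least three extra tiles; the one subtlety I anticipate is a square that straddles the inner corner of the L, which must be analyzed separately.
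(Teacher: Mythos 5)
Your reduction to the boundary case $b=1/(k+1)$ is correct, and its packaging is in fact a little cleaner than the paper's. The paper invokes Lemma 6 only on the two intervals $\bigl((k-1)/k,\,k/(k+1)\bigr)$ and $\bigl(k/(k+1),\,(k+1)/(k+2)\bigr)$ and then kills $s_A\geq (k+1)/(k+2)$ by counting the tiles forced along two edges ($2k+4$ tiles, too many); you instead sweep up every non-reciprocal $b$ with a single application of Lemma 6 (using $3-b>3-1/k'\geq 3-1/k$) and then use the area inequality $1\leq (1-b)^2+(2k+2)b^2$, i.e.\ $b\geq 2/(2k+3)>1/(k+2)$, to leave only $b=1/(k+1)$. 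Both routes are valid and of comparable length; your area step is a nice substitute for the paper's edge count.

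The genuine gap is the boundary case itself, which you correctly identify as the crux but do not prove. You assert that the L-shaped region of arm width $1/(k+1)$ admits no squaring with exactly $2k+2$ squares (one more than its minimum $2k+1$) and offer only a program --- ``follow a corner tile inward,'' with corner-straddling squares ``analyzed separately'' --- for establishing it. That assertion is a nontrivial rigidity theorem about \emph{all} squarings of the L, including cascades of unequal squares inside a width-$b$ arm, and nothing in your sketch rules these out; as it stands it is no easier than the lemma you are trying to prove. It is also stronger than necessary: the paper never decides whether the L can be squared with $2k+2$ tiles. Instead it uses minimality once more. Rerun the integral computation of Lemma 6 with $b=1/(k+1)$: if only $m\leq k-1$ tiles of side exactly $b$ sit on the top edge, the same estimate gives $\sigma(T)\geq 3-b=3-\frac{1}{k+1}>3-\frac1k$, contradicting minimality of $T$; hence at least $k$ side-$b$ tiles lie on the top edge, and by the rotated argument at least $k$ on the left edge. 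With $A$ this accounts for $2k+1$ tiles and leaves a $b\times b$ square in the upper-left corner to be tiled by the remaining \emph{two} tiles, which is impossible. So for a minimal $T$ the case is vacuous, with no structural theorem about the L needed. To complete your proposal you must either actually prove your strip/L rigidity claim or replace that step with this minimality argument.
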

\begin{proof}
Suppose first that
$(k-1)/k < s_A < k/(k+1)$. Then $b=1-s_A$ satisfies
$1/k>b>1/(k+1)$. By the previous lemma, we have
$\sigma(T)\geq 3-b >3-1/k$, so $T$ is not minimal.
We can similarly rule out the case that $k/(k+1)<s_A
<(k+1)/(k+2)$. If $s_A\geq (k+1)/(k+2)$, then we need
at least $2(k+1)+1+1=2k+4$ tiles to tile the unit square,
which is too many. Thus we are reduced to the
case that $a=k/(k+1)$.

In this case, we redo the calculation that we did
in the $3-b$ lemma. The integral is exactly the 
same as before: if there are $m$ tiles ($0\leq m
\leq k-1$) of size
$b=1/(k+1)$ on the top of the unit square, then
$\sigma(T)\geq 3-b=3-1/(k+1)>3-1/k$, which
means $T$ is not minimal. Thus there must
be at least $k$ tiles on top of the unit square.
Similarly there are $k$ tiles on the left side of
of the unit square. This accounts for $2k+1$ tiles;
the empty space at the upper left corner is a
$b$-by-$b$ square, which cannot be tiled with
just two tiles. Thus this case also leads to $T$
being not minimal.
\end{proof}

Of particular interest is the case $k=2$.
In this case we conclude that $s_A$ is at most $1/2$,
but we know that $s_A\geq 1/2$. Thus $s_A=1/2$. By 
Lemma 4, we conclude
that $T$ has 3 tiles of length 1/2. The remaining
$1/2\times 1/2$ space must be tiled with four
tiles, and there is only one way to do that. So 
the length of the tiling is $5/2$, i.e., $f_m(7)=5/2$.

We are now ready to prove our main result.

\begin{thm}
For $k\geq 2$,
\[
f_m(n) =\begin{cases}%
3-\frac2{k} & \text{if $n=2k$},\\
3-\frac1{k} & \text{if $n=2k+3$}.
\end{cases}
\] 
\end{thm}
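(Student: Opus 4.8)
The plan is to prove the matching lower bound, since the constructions already cited give $f_m(2k)\le 3-2/k$ and $f_m(2k+3)\le 3-1/k$. It suffices to treat a minimal tiling $T$, so by Lemmas 1--4 I may assume the structure already obtained: a largest tile $A$ at a corner with $s_A=1-b$, congruent tiles of side $b=1-s_A$ at the two adjacent corners, every other tile of side at most $b$, and all tiles besides $A$ confined to the L-shaped strip of width $b$ along the left and top edges. Lemmas 5 and 7 give $b\ge 1/k$ in the even and odd cases respectively, while $s_A\ge 1/2$ gives $b\le 1/2$; throughout I write $p=\lceil 1/b\rceil$, so $2\le p\le k$.

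The main computational tool is the Staton--Tyler integral used exactly as in Lemma~6. On $0<x<b$ the line misses $A$, so the crossing tiles (each of side $\le b$) stack to height $1$ and $f(x)\ge p$. On $b<x<1$ the line meets $A$, and $f(x)=2$ precisely where a single tile of side $b$ sits directly on top of $A$, while $f(x)\ge 3$ elsewhere. If $m$ counts these maximal tiles above $A$, the bookkeeping of Lemma~6 gives
\[
\sigma(T)\ \ge\ pb+2mb+3\bigl(1-b-mb\bigr)\ =\ 3+(p-3-m)\,b .
\]
Since the top tiles occupy width $\le 1-b$ we have $m\le\lfloor 1/b\rfloor-1$, i.e.\ $m\le p-1$ when $b=1/p$ and $m\le p-2$ otherwise. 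This already settles the extreme value $b=1/k$: with $p=k$ and $m\le k-1$ it yields $\sigma(T)\ge 3-2/k$ (the even target, agreeing with the elementary area bound $\sigma\ge(1-b)+\tfrac1b\bigl(1-(1-b)^2\bigr)=3-2b$), while in the odd extremal tiling, where one maximal tile is quartered, the missing length $1/k$ is forced either through $m\le k-2$ or through the extra crossing this quartering produces in the strip, landing exactly on $3-1/k$.

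The hard part is the range $b>1/k$ — and, in the even case, the unit–fraction values $b=1/p$ with $p<k$ — where the displayed estimate degrades to $3-b$ or $3-2/p$ and no longer reaches the target on its own. Here my plan is to bring in the \emph{exact} tile count. Write $\sigma(T)=\Lambda+(1-b)+\Theta$, where $\Lambda=\int_0^b f$ and $\Theta=\int_b^1 f-(1-b)$ are the total side-lengths contributed to the left of and above $A$; each is governed by a minimal tiling sum of a sub-rectangle (the $b\times 1$ strip and the $(1-b)\times b$ bar), and that minimal sum cannot fall below its value at the structural minimum of about $2p$ tiles. A tiling with $b>1/k$ must use more tiles than this minimum, and any surplus subdivision of a square of side $s$ into $c$ squares costs $s\bigl(f_m(c)-1\bigr)>0$ — for instance a split into four costs $s$, since $f_m(4)=2$ — so the surplus of $2k-2p$ (resp.\ $2k+3-2p$) tiles pushes $\Lambda+\Theta$ back up to at least the claimed value.

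I expect the delicate point to be making this growth quantitative and uniform. Near $b=1/k$ the slack is tiny and one must show the count forces \emph{exactly} the extra $2/k$ (even) or $1/k$ (odd), whereas for $b$ well away from $1/k$ there is comfortable room; moreover one must lower–bound the minimal tiling sums of the two sub-rectangles for a genuinely arbitrary tiling, not merely for tilings built by nested subdivision. The cleanest route seems to be strong induction on $n$: the base cases are $k=2$ (done in the text, with $f_m(7)=5/2$) together with the small $k$ where $2/k\ge b$ already makes the Lemma~6 bound $3-b$ sufficient, and in the inductive step every further subdivision is priced by $f_m$ at a smaller argument, so that the value of $f_m$ at smaller indices feeds directly into controlling the surplus.
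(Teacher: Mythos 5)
Your setup is sound as far as it goes: the reduction via Lemmas 1--4, the Staton--Tyler estimate $\sigma(T)\ge pb+2mb+3(1-b-mb)=3+(p-3-m)b$ with $p=\lceil 1/b\rceil$, and the even case at the boundary value $b=1/k$ all check out. But the heart of the theorem --- the range $1/k<b\le 1/2$, and the odd case even at $b=1/k$ --- rests on a ``surplus subdivision cost'' scheme that does not work as stated. First, an arbitrary tiling of the residual region is not obtained from a minimal-count tiling by subdividing tiles, so pricing each extra subdivision at $s\bigl(f_m(c)-1\bigr)$ lower-bounds nothing; you flag this gap yourself but offer no repair. Second, the principle your surplus argument needs --- that more tiles force a larger minimal sum --- is false: the theorem itself gives $f_m(9)=3-\frac13=\frac83$ while $f_m(10)=3-\frac25=\frac{13}{5}<\frac83$, so a tile-count surplus alone cannot push $\Lambda+\Theta$ back up. Third, the decomposition $\sigma(T)=\Lambda+(1-b)+\Theta$ does not split into independent tiling sums of the $b\times 1$ strip and the $(1-b)\times b$ bar: tiles may straddle the line $x=b$, and these regions are rectangles, whose minimal square-tiling sums are not governed by $f_m$ and are nowhere controlled in your sketch. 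Finally, in the odd case at $b=1/k$ the integral bound yields only $3-2/k$, and your claim that the ``quartering'' supplies the missing $1/k$ presupposes the extremal structure rather than proving it for an arbitrary tiling with $2k+3$ tiles.

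What you are missing is the paper's key geometric reduction, which replaces all of this. When $a=s_A>1/2$, the three corner tiles leave an L-shaped region whose union with the square corner piece of $A$ of side $a-(1-a)$ is exactly the upper-left $a\times a$ subsquare; hence the remaining $n-3$ tiles together with that one fake tile form a genuine tiling of this subsquare by $n-2$ squares. Since $2k\mapsto 2(k-1)$ and $2k+3\mapsto 2(k-1)+3$, induction on $k$ applies after scaling by $a$; adding back $a+2(1-a)$ and subtracting the fake tile's length $a-(1-a)$ gives $\sigma(T)\ge 3-\frac{2a}{k-1}$ in the even case and $\sigma(T)\ge 3-\frac{a}{k-1}$ in the odd case, and Lemmas 5 and 7 ($a\le (k-1)/k$) then yield exactly $3-\frac2k$ and $3-\frac1k$. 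The leftover possibility $a=1/2$ is dispatched separately: the remaining $\tfrac12\times\tfrac12$ subsquare carries $2k-3$ (resp.\ $2k$) tiles, and either this is impossible or the induction hypothesis applies at the smaller index. Your proposal never identifies this exact self-similar reduction, and without it the strong induction you outline has no sound mechanism to run on.
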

\begin{proof}
The proof is by induction on $k$. If $k=2$,
in the even case we have $n=4$, where
the result is clear; in the odd case we have $n=7$,
which we have already proved.  So assume
that $k>2$.

From Lemma 4 we know that it does no harm
to assume that our minimal tiling has a big tile $A$
located at a corner. For ease of reading write $a$ 
for the length $s_A$ of tile $A$. We can also 
assume that there are tiles of length $1-a$ 
adjacent to $A$. 

\begin{minipage}{3.8in}
Suppose first that $T$ contains $2k$ tiles.
Suppose further that $a>1/2$.
Notice that there is a tiling of the upper left $a\times a$
subsquare. In the picture
this subsquare is indicated by the dashed line.
\end{minipage}
\qquad
\begin{minipage}{1.5in}
\begin{tikzpicture}
\draw[thick] (0,0) rectangle (2,2);
\draw[thick] (2/3,0) rectangle (2,4/3);
\draw[thick] (0,2/3) -- (2/3,2/3); \draw[thick] (4/3,4/3) -- (4/3,2);
\draw[fill=gray,gray] (0,2/3) rectangle (2/3,2);
\draw[fill=gray,gray] (2/3,4/3) rectangle (4/3,2);
\draw[dashed,ultra thick] (0,2/3) rectangle (4/3,2);
\end{tikzpicture}
\end{minipage}

The number
of tiles in this tiling of the subsquare is $2k-2=2(k-1)$.
Applying the induction hypothesis, the total length of
this tiling is at least $a(3-2/(k-1))$. Therefore the total
length of the original tiling of the unit square is
at least
\[
a\left(3-\frac2{k-1}\right) +2(1-a)+a -[a-(1-a)]=3-\frac{2a}{k-1}.
\]
By Lemma 5 $a\leq (k-1)/k$, so
\[
3-\frac{2a}{k-1}\geq 3-\frac{2(k-1)/k}{k-1} = 3-\frac{2}{k},
\]
as required.

There is also the pesky possibility that $a=1/2$. In this
case the unit square is divided into four subsquares,
where the upper left $1/2\times 1/2$ subsquare is
further tiled into $2k-3=2(k-3)+3$ tiles. This might
not be possible---indeed, if $k=3,4$, then this
subsquare cannot be tiled with 3 or 5 tiles. In such
cases we are finished. So we can assume that
$k\geq 5$. By the induction hypothesis,
the total length of this subtiling is at least 
$(1/2)(3-1/(k-3))=3/2-1/(2k-6)$. Thus the total length
of the tiling of the unit square is $3/2-1/(2k-6)+3/2
=3-1/(2k-6)$. This is bigger than $3-2/k$ for $k\geq 5$,
and we are done.

We now have to deal with the case where
$T$ has $2k+3$ tiles. 
As before, we first consider the case
where there is a big tile of size $a>1/2$; the picture
is as above. The dashed subsquare is tiled with
$2k+1$ tiles. By the induction hypothesis, the
length of the tiling is at least $a(3-1/(k-1))=
3a-a/(k-1)$. Thus the total length of the tiling
of the unit square is at least
\[
3a-\frac{a}{n-1}+ 2(1-a) + a - [a-(1-a)]=3-\frac{a}{k-1}.
\]
We know from Lemma 7 that $a\leq
(k-1)/k$, so 
\[
3-\frac{a}{k-1}\geq 3-\frac{(k-1)/k}{k-1} = 3-\frac1{k},
\]
as required.

We also need to consider the case where $a=1/2$.
In this case, the unit square is divided into four
$1/2\times 1/2$ subsquares, and one of these
subsquares is tiled into $2k$ tiles. We already
showed above that the length of this tiling is at least
$(1/2)(3-2/k)=3/2-1/k$, so the total length
of the tiling of the unit square is at least
$3/2-1/k+3/2=3-1/k$, which is what we require.

\end{proof}

\end{document}